\newtheorem{lem}{Lemma}
\newtheorem*{mainthm}{Main Theorem}
\newtheorem{thm}[lem]{Theorem}
\newtheorem{conjecture}[lem]{Conjecture}
\def\mad{{\rm{mad}}}
\def\config#1{\item[{\rm (#1)}]}
\def\floor#1{\left\lfloor #1\right\rfloor}
\def\l{\ell}
\def\chil{\chi_{\ell}}
\begin{document}
\title{
Sufficient sparseness conditions for $G^2$ to be\\ $(\Delta+1)$-choosable, when $\Delta\ge
5$.
}
\author{
Daniel W. Cranston\thanks{Virginia Commonwealth University, Department of
Mathematics and Applied Mathematics, Richmond, VA, USA. Email: {\tt
dcranston@vcu.edu}}
\and
Riste \v Skrekovski\thanks{Department of Mathematics, University of Ljubljana, Ljubljana 
\& Faculty of Information Studies, Novo Mesto, Slovenia.  Partially supported by ARRS Program P1-0383. 
Email: {\tt skrekovski@gmail.com}
}
}
\date{\today}
\maketitle

\noindent
{\bf Keywords:} choosability; square of a graph;
maximum average degree; discharging; girth; maximum degree \\

\abstract{We determine the list chromatic number of the square of a
graph $\chil(G^2)$ in terms of its maximum degree $\Delta$ when its maximum
average degree, denoted $\mad(G)$, is sufficiently small.
For $\Delta\ge 6$, if $\mad(G)<2+\frac{4\Delta-8}{5\Delta+2}$, then
$\chil(G^2)=\Delta+1$.  In particular, if $G$ is planar with girth $g\ge
7+\frac{12}{\Delta-2}$, then $\chil(G^2)=\Delta+1$.
Under the same conditions, $\chil^i(G)=\Delta$, where $\chil^i$ is the list
injective chromatic number.
}

\section{Introduction}

The square of a graph $G$, denoted by $G^2$, is the graph with $V(G^2)=V(G)$ and
$E(G^2)=\{uv\mid  d_G(u,v)\leq 2\}$; in other words, two vertices are adjacent
in $G^2$ if they are at distance at most two in $G$.
If $G$ has maximum degree $\Delta$, then coloring $G^2$
requires at least $\Delta+1$ colors; the upper bound $\Delta^2+1$
follows from  the greedy algorithm.  This upper bound is also achieved for a
few graphs, for example for the 5-cycle and the Petersen graph.

Regarding the coloring of squares of planar graphs, Wegner~\cite{wegner} posed
the following central problem.

\begin{conjecture}[Wegner]
\label{wegner}
For a planar graph $G$ of maximum degree $\Delta$:
$$\chi(G^2)\leq \left\{  \begin{array}{ll}
         7, & \mbox{$\Delta=3$};\\
         \Delta+5, & \mbox{$4\leq \Delta \leq 7$};\\
         \lceil\frac{3}{2}\Delta \rceil+1, & \mbox{$\Delta \geq 8$}.
         \end{array} \right.$$
\end{conjecture}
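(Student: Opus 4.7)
The plan is to attack the conjecture by the standard minimum-counterexample-plus-discharging framework, handling the three $\Delta$-ranges separately. In each range I would assume $G$ is a planar graph of maximum degree $\Delta$ violating the asserted bound and chosen to minimize $|V(G)|+|E(G)|$, then derive a contradiction by combining reducible-configuration lemmas (local colorability extensions) with an Euler-formula-based discharging argument. For each candidate reducible configuration the scheme is: delete a carefully chosen edge or vertex $u$, apply minimality to properly color the square of the smaller graph with the allowed palette, count the distinct colors forbidden on $u$ by $N[u]\cup N(N(u))$ in the original graph, and show that this number is strictly less than the palette size so that the coloring extends.

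For $\Delta=3$ the target $7$ is tight, and I would aim to establish that no minimum counterexample contains a low-degree vertex near another low-degree vertex, that no short face has a poor degree profile, and then discharge with initial charge $d(v)-4$ on vertices and $\l(f)-4$ on faces (summing to $-8$ by Euler) to force one of the forbidden configurations. The range $4\le\Delta\le 7$ requires a separate discharging table for each value of $\Delta$; the reducible configurations now involve pairs of adjacent vertices of moderate degree, $3$-faces incident to specific degree profiles, and small edge-cuts whose removal permits extension via Hall-type arguments on the uncolored vertices. For $\Delta\ge 8$ the bound $\lceil\tfrac32\Delta\rceil+1$ is attained by Wegner's own ``book'' construction (many triangles sharing a common edge), so the proof must be essentially extremal. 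Here my plan is to prove a structural lemma exploiting the planar cyclic order at a high-degree vertex $v$: the neighbors of $v$ and their neighbors are forced, by non-crossing of edges, to cluster into at most about $\lceil\tfrac32\Delta\rceil$ mutual-distance-$\le 2$ classes, which one would then feed into a greedy coloring that processes high-degree vertices last.

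The main obstacle, at every stage, is the extremal tightness of the three bounds. Relatively clean discharging gives $\chi(G^2)\le 2\Delta$ for planar graphs, and the probabilistic/structural machinery of Havet--van den Heuvel--McDiarmid--Reed yields an asymptotic $(1+o(1))\tfrac32\Delta$; but closing the gap to the exact Wegner numbers requires picking out precisely the right reducible configurations for the small cases and an essentially complete extremal structure theorem for $\Delta\ge 8$. It is this precision that has kept the conjecture open for over fifty years: any serious attempt must confront Wegner's tight examples at every step, and in particular must explain why barely more than $\tfrac32\Delta$ colors suffice in the large-$\Delta$ regime even though dense planar ``books'' saturate the bound.
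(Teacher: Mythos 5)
This statement is Wegner's conjecture, which the paper explicitly presents as an open \emph{conjecture} (attributed to Wegner's 1977 technical report) and does not prove; it is stated only as motivation and context for the authors' results on sparse graphs. So there is no ``paper's own proof'' to compare against, and any purported proof would be a major result far beyond the scope of this paper. The best known general upper bound cited in the paper is $\chi(G^2)\le\lceil\frac{5}{3}\Delta\rceil+78$ (Molloy--Salavatipour), together with the asymptotic $\frac{3}{2}\Delta(1+o(1))$ of Havet, van den Heuvel, McDiarmid, and Reed; the exact Wegner bounds remain unproven for every $\Delta\ge 3$ except in special subclasses.

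Your submission is not a proof but a research programme, and it contains no content that closes the known gaps. Concretely: (i) you name no actual reducible configurations and no discharging rules for any of the three ranges, only the shape such rules would take; (ii) the extension step you describe (delete $u$, color the smaller square, count forbidden colors on $N[u]\cup N(N(u))$) is exactly the step that fails for Wegner's tight examples, since a vertex of $G^2$ can have up to roughly $\Delta^2$ second neighbors while the palette has only about $\frac{3}{2}\Delta$ colors, so a naive count of forbidden colors does not come in under the palette size and no mechanism is offered for why enough colors repeat; (iii) the proposed ``clustering'' lemma for $\Delta\ge 8$ -- that planarity forces the distance-$\le 2$ neighborhood of a high-degree vertex into about $\lceil\frac{3}{2}\Delta\rceil$ classes -- is precisely the unproven heart of the matter, asserted rather than established. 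Since the conjecture is open, these are not repairable local gaps but the absence of the essential ideas; the proposal should be read as a (reasonable) survey of why the problem is hard, not as a proof.
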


In~\cite{Havet1} Havet, van den Heuvel, McDiarmid, and Reed showed that 
the following holds: $\chi(G^2)\leq \frac{3}{2}\Delta(1+o(1))$,
which is also true for the choice number (defined below).
Dvo\v r\'ak, Kr\'al', Nejedl\'y, and \v Skrekovski~\cite{skreko1} showed that
the square of every planar graph of girth at least six with sufficiently large
maximum degree $\Delta$ is $(\Delta+2)$-colorable.
Borodin and Ivanova~\cite{borodin2} strengthened this result to prove that for
every planar graph $G$ of girth at least six with maximum degree
$\Delta \ge 24$, the choice number of $G^2$ is at most $\Delta+2$.
Most recently, Bonamy, L\'eveque, and Pinlou~\cite{Bonamy} showed the same
conclusion for every planar graph $G$ with girth at least six and $\Delta\ge 17$.
In fact, their proof only requires $\mad(G)<3$ (defined below).
Lih, Wang, and Zhu~\cite{Lih} showed that the square of a $K_4$-minor free
graph with maximum degree $\Delta$ has chromatic number at most $\lfloor
\frac{3}{2}\Delta \rfloor+1$ if $\Delta \geq 4$ and $\Delta+3$ if $\Delta
\in\{2,3\}$.
Hetherington and Woodall~\cite{Hetherington} showed that the bounds
in~\cite{Lih} also hold for the choice number.

We write $\Delta$ for the maximum degree of a fixed graph $G$.
A {\it $k$-vertex} is a vertex of degree $k$.  Similarly, a {\it $k^+$-vertex}
(resp.\ {\it $k^-$-vertex}) is a vertex of degree at least (resp.\ at most) $k$.
 A {\it $k$-thread} is a path with $k$ internal $2$-vertices.
The {\it endpoints} of a thread are its first and last vertices.  A {\it weak
neighbor} of a vertex $v$ is one joined to $v$ by a $k$-thread (for some $k\ge
1$), and a {\it weak $k$-neighbor} is a weak neighbor that is a $k$-vertex.
We write $N(v)$ for the neighborhood of $v$ and $N[v]$ for $N(v)\cup\{v\}$.

A proper {\it coloring} of the vertices of a graph $G$ is
a mapping $c:V(G)\rightarrow \mathbb{N}$  such that every two adjacent vertices are
mapped to different colors.
Elements of $\mathbb{N}$ are {\it colors}.
List coloring was first studied by Vizing~\cite{vizing}
and is defined as follows.  Let $G$ be a simple graph. A {\it list-assignment}
$L$ 
is an assignment of lists of colors to vertices.  A {\it list-coloring} is
a coloring where each vertex $v\in V(G)$ receives a color from $L(v)$, and the graph
$G$ is {\it $L$-choosable} if there is a proper $L$-list-coloring.
If $G$ has a list-coloring for every list-assignment with $|L(v)|\ge k$ for each
vertex $v$, then $G$ is {\it $k$-choosable}.
The minimum $k$ such that $G$ is $k$-choosable is the {\it choice number}
of $G$, and is denoted by $\chi_l$.
An {\it injective coloring} of a graph $G$ is a mapping $c:V(G)\rightarrow
\mathbb{N}$  such that vertices with a common neighbor are mapped to different
colors (but it need {\it not} be proper).  The {\it injective chromatic number}
$\chi^i(G)$ and {\it injective choice number} $\chil^i(G)$ are defined
analogously.  For each $G$, we have $\chil^i(G)\le\chil(G^2)$.

In the proofs of our theorems, we use the discharging method, which was first used
by Wernicke~\cite{wernicke}, and which is most well-known for its central role in the
proof of the Four Colour Theorem.  Here we apply the discharging method in the
more general context of bounded {\it maximum average degree}, denoted $\mad(G)$,
which is defined as $\mad(G):=\max_{H\subseteq G}\frac{2|E(H)|}{|V(H)|}$,
where $H$ ranges over all subgraphs of $G$.
A straightforward consequence of Euler's Formula is that every planar graph $G$
with girth at least $g$ satisfies $\mad(G)<\frac{2g}{g-2} = 2+\frac{4}{g-2}$.
Using this bound on $\mad$, our results for planar graphs follow immediately from
corresponding results for maximum average degree.
The key tool in many of our proofs is global discharging, which relies on
reducible configurations that may be arbitrarily large.  Global discharging was
introduced by Borodin~\cite{borodin1}, and has been applied widely; for example,
see~\cite{borodin2}  and~\cite{cranston1}.

Kostochka and Woodall~\cite{kostochka} conjectured that every square of a
graph has choice number equal to chromatic number, i.e., $\chi_l(G^2)
=\chi(G^2)$.  This conjecture inspired much research, although recently it has
been disproved~\cite{Kim}.
For planar graphs, the best upper bound on $\chi(G^2)$ in terms of $\Delta$ was
succesively improved
by Jonas~\cite{Jonas}, Wong~\cite{Wong}, Van den Heuvel and McGuinness~\cite{Heuvel},
 Agnarsson and Halld´orsson~\cite{Agnarsson}, Borodin et al.~\cite{borodin} and finally by
 Molloy and Salavatipour~\cite{Molloy} to the best known upper bound so far,
$\chi(G^2)\leq \left\lceil \frac{5}{3}\Delta\right\rceil+78$.
For the best asymptotic upper bound, see~\cite{Havet1}, mentioned above.

The choosability of squares of subcubic planar graphs has been extensively
studied by Dvo\v r\'ak, \v Skrekovski, and Tancer~\cite{skreko}, Montassier and
Raspaud~\cite{montassier}, Havet~\cite{havet}, and Cranston and
Kim~\cite{cranston}. In \cite{CES}, we gave upper bounds on $\chi_l(G^2)$ when
$\Delta(G)= 4$ and $\mad(G)$ is bounded. In the present paper, we again
consider graphs $G$ with bounded maximum average degree, but now with
higher maximum degree.  For $\Delta(G)\ge 6$, our results are summarized in
the following theorem.

\begin{mainthm} Let $G$ be a graph with maximum degree $\Delta\ge 6$.
If $\mad(G)<2+\frac{4\Delta-8}{5\Delta+2}$, then $\chil(G^2)= \Delta+1$.  In particular,
if $G$ is planar with girth $g\ge 7+\frac{12}{\Delta-2}$, then $\chil(G^2)=\Delta+1$.
\end{mainthm}

Besides our Main Theorem, for $\Delta=5$ we prove that $\mad(G)<2+12/29$
implies $\chil(G^2)=6$.  Note that for $\Delta=4$, in \cite{CES} we proved
that $\mad(G)<2+2/7$ implies $\chil(G^2)=5$.
We also construct examples with maximum degree $k$ and $\mad$ arbitrarily close
to $2+2/7$ (resp.\ $2+12/29$ and $2+(4k-8)/(5k+2)$) that contain none of the
reducible configurations we use in the proofs.  So to improve the coloring
results, we need additional reducible configurations.

The Main Theorem is proved in three parts: $k=6$, $k=7$, and $k\ge 8$.
In each part, we assume a counterexample with the fewest vertices, then
reach a contradiction.  When we remove one or more vertices from this graph,
the square of the result can be properly colored from its lists. We elaborate
on this approach in the next section.

We mention in passing that each time that we prove that $\chil(G^2)=\Delta+1$,
the proof can be modified to show that $\chil^i(G)=\Delta$.  The coloring
algorithms are the same, but now each vertex has at least one fewer constraints
on its color.
\bigskip

After submitting this paper, we learned that Bonamy, L\'{e}v\^{e}que, and
Pinlou~\cite{BLP} have submitted a paper proving similar 
results, using similar methods.  For $\Delta\in\{6,8\}$, their results match
ours.  In general, for $\epsilon > 0$, they proved there exists
$\Delta_{\epsilon}$ such that if $\mad(G)<\frac{14}5-\epsilon$ and $\Delta\ge
\Delta_{\epsilon}$, then $\chil(G^2)=\Delta+1$ and $\chil^i(G)=\Delta$.
However, for general $\epsilon$ (other than
$\epsilon\in\{\frac3{10},\frac8{35}\}$, which
corresponds to $\Delta\in\{6,8\}$), their bound on $\Delta_{\epsilon}$ is
slightly weaker than ours. 
Perhaps the best comparison is as follows.   To prove that
$\chil(G^2)=\Delta+1$ and $\chil^i(G)=\Delta$ when
$\mad(G)<\frac{14}5-\epsilon$, their required lower bound on $\Delta$ is a
little more than $\frac53$ times as large as ours.

%
%
%

\section{Reducible configurations}
\label{sect:configs}

A {\it configuration} is an induced subgraph of a graph $G$. A
configuration $C$ is {\it reducible} if whenever $G$ contains $C$, we can form a
graph $G'$ with fewer edges than $G$ such that any good coloring of $G'$ gives
rise to a good coloring of $G$.
Thus a reducible configuration cannot appear in a minimal counterexample.
For convenience, we often write {\it color $G-C$} to mean color $(G-C)^2$ from
its assigned lists.
To prove that a configuration is reducible, we infer from the minimality of
$G$ that the subgraph $G-C$ can be properly colored, and then prove that this
coloring can be extended to a proper coloring of the original graph $G$, which
gives a contradiction.

A configuration is {\it $k$-reducible} if it is reducible in the setting of
$k$-choosability. Clearly a $k$-reducible configuration is also
$(k+1)$-reducible. 
Perhaps the easiest example is that a $1$-vertex $v$ is $(\Delta+1)$-reducible,
since $G-v$ can be colored by minimality, and $v$ has at most $\Delta$
neighbors in the square, each forbidding at most one color.
Here we show the reducibility of some of the configurations
that we use later in the proof.  For consistency with our application of these
lemmas later on (when $k$ denotes the maximum degree of a vertex), here we
prove that configurations are $(k+1)$-reducible.

\begin{figure}[bt]
\label{f.2}
\begin{center}
  \begin{tabular}{ccc}
     \includegraphics[scale=.95]{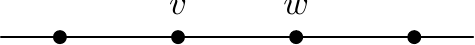}\hspace{.4cm}&
     \includegraphics[scale=.95]{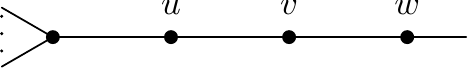}\hspace{.4cm}&
     \includegraphics[scale=.95]{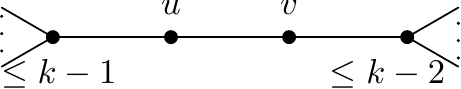}\\
     (C1) & (C2) & (C3)
  \end{tabular}
  \caption{Configurations from Lemma~\ref{l.2}}
\end{center}
\end{figure}

\begin{lem} \label{l.2}
For $k\ge 4$, the following configurations are $(k+1)$-reducible:
\begin{enumerate}
\config{C1} a 4-thread;
\config{C2} a 3-thread with an endpoint of degree at most $k-1$;
\config{C3} a 2-thread with endpoints of degree at most $k-1$ and $k-2$;
\end{enumerate}
\end{lem}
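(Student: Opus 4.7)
The plan is the standard minimum-counterexample argument: for each configuration $C$ I delete the internal $2$-vertices of the thread to obtain $G'$, use minimality of $G$ to $L$-color $(G')^2$ for any list assignment with $|L(v)|\ge k+1$, and then extend the coloring to the deleted vertices one at a time. When we re-color a thread-internal $2$-vertex $v$ whose unique non-thread neighbor is $u$, the only forbidden colors from outside the thread come from $u$ and $N(u)\setminus\{v\}$, contributing at most $\deg(u)\le k$ colors; the rest of the bookkeeping is for thread vertices already colored within $G^2$-distance two of $v$. The whole task is thus to choose a coloring order so that at each step the number of already-colored vertices in $N_{G^2}(v)$ is at most $k$, which leaves at least one usable color in $L(v)$.

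For (C1), write the $4$-thread as $u_0v_1v_2v_3v_4u_5$ and color in the order $v_1,v_4,v_2,v_3$. The vertices $v_1$ and $v_4$ each see within $G^2$ only their non-thread endpoint and its other neighbors (at most $\deg(u_0)\le k$ and $\deg(u_5)\le k$ colors, respectively). When $v_2$ is colored, exactly three of its four $G^2$-neighbors (namely $u_0,v_1,v_4$) have been colored, forbidding $3\le k$ colors; when $v_3$ is colored, all four of its $G^2$-neighbors ($u_5,v_4,v_2,v_1$) have been colored, forbidding $4\le k$ colors, which is where the hypothesis $k\ge 4$ is used.

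For (C2), with $3$-thread $u_0v_1v_2v_3u_4$ and $\deg(u_0)\le k-1$, color in the order $v_3,v_1,v_2$. At $v_3$ the forbidden colors come from $u_4$ and $N(u_4)\setminus\{v_3\}$, at most $\deg(u_4)\le k$. At $v_1$ they come from $u_0$, $N(u_0)\setminus\{v_1\}$, and $v_3$, at most $\deg(u_0)+1\le k$. At $v_2$ all four $G^2$-neighbors ($u_0,v_1,v_3,u_4$) are colored, using at most $4\le k$ colors. For (C3), with $2$-thread $u_0v_1v_2u_3$, $\deg(u_0)\le k-1$, $\deg(u_3)\le k-2$, color $v_1$ then $v_2$: at $v_1$ the forbidden colors come from $u_0$, $N(u_0)\setminus\{v_1\}$, and $u_3$, at most $\deg(u_0)+1\le k$; at $v_2$ they come from $u_3$, $N(u_3)\setminus\{v_2\}$, $u_0$, and $v_1$, at most $\deg(u_3)+2\le k$.

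The only real decision is the ordering; once it is fixed, the counting is essentially forced. The trickiest case is (C1), where there is no endpoint-degree slack, so one must exploit the fact that the middle vertices have enough uncolored thread-neighbors when they are reached --- hence the asymmetric order that colors $v_1,v_4$ before $v_2,v_3$ rather than sweeping along the thread.
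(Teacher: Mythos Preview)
Your argument is correct and follows essentially the same approach as the paper's proof: delete internal $2$-vertices, color the smaller graph by minimality, and extend greedily by counting constraints. The only cosmetic difference is that the paper deletes fewer vertices---for (C1) just the two middle vertices $v_2,v_3$ (so each then has two available colors and the pair is trivially $2$-choosable), and for (C2) just the two vertices $v_1,v_2$ nearest the low-degree endpoint---whereas you delete all internal vertices and compensate with a careful coloring order; the counts come out the same either way. One small point the paper handles that you omit is the degenerate case where the two endpoints of the thread coincide (i.e., the thread closes into a short cycle), but those cases are strictly easier.
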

\begin{proof} For illustration see Figure~1. 
For simplicity we assume in (C1)--(C3) that the endpoints of the thread are
distinct.  This assumption can be justified by showing that any 3-cycle,
4-cycle, or 5-cycle with at most one $3^+$-vertex is $(k+1)$-reducible.  The
arguments are similar to those below (but even easier), so we omit them.

  The reducibility of (C1) is given in \cite{CES} but we repeat it here. 
Let
$u$ and $v$ be the middle two vertices of the 4-thread.  By the minimality of
$G$ we can color 
$G-u-v$.  
We say a color is \emph{available} for a vertex $v$, if it is in the list of
allowable colors for $v$ and has not already been used on neighbor of $v$ in
$G^2$.
Now $u$ and $v$ each have at least
two available colors, so we can easily extend the coloring to $G^2$.

Let $uvw$ be the 3-thread from (C2) with $u$ adjacent to an endpoint of
degree at most $k-1$. By minimality of $G$, we color $G-u-v$.  Now $u$ has at
least one available color, and $v$ has at least two.  So color first $u$ and
then $v$ to get a coloring of $G^2$.

Let $uv$ be the 2-thread from (C3) with $u$ and $v$ adjacent to endpoints of
degrees at most $k-1$ and $k-2$, respectively. If we color $G-u-v$, then $u$
has at least one available color and $v$ has at least two. So we can easily
extend this coloring to $G^2$.
\end{proof}

\begin{figure}[!b]
\label{f.3}
\begin{center}
  \begin{tabular}{ccc}
     \includegraphics[scale=0.85]{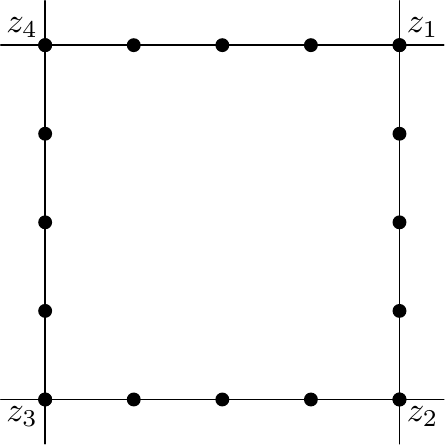}\hspace{.2cm}&
     \includegraphics[scale=.85]{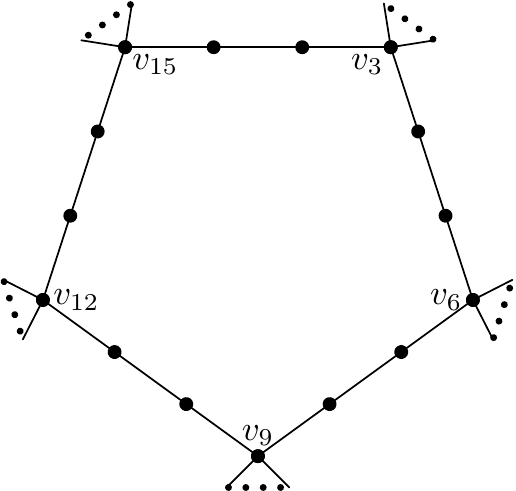}\hspace{.2cm}&
     \includegraphics[scale=.85]{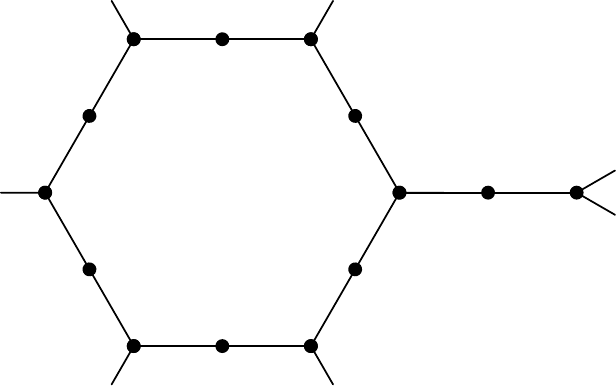}\\
     (C4) & (C5) & (C6)~~~~~~~~~~~
   \end{tabular}
      \caption{Configurations from Lemma~\ref{l.3}}
\end{center}
\end{figure}

\begin{lem} \label{l.3}
For $k\ge 5$, the following configurations are $(k+1)$-reducible:
\begin{enumerate}
\config{C4} a $(4\l)$-cycle $v_1v_2\dots v_{4\l}$ such that $d(v_i)\le k$
when $4\,|\,i$ and $d(v_i)=2$ otherwise;
\config{C5} a $(3\l)$-cycle $v_1v_2\ldots v_{3\l}$ such that $d(v_i)\le k-1$
when $3\,|\,i$ and $d(v_i)=2$ otherwise;
\config{C6} a cycle induced by 1-threads incident to 3-vertices (at both
ends) with at least one of these 3-vertices on the cycle incident to a third
1-thread with a 3-vertex at the other end.
\end{enumerate}
\end{lem}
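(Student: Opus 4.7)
For (C4), the plan is to delete the middle 2-vertex of each 3-thread, $S := \{v_{4i+2} : 1 \le i \le \ell\}$, colour $(G-S)^2$ by minimality, and extend greedily. Each $v_{4i+2}$ is a 2-vertex whose $G^2$-neighbourhood is exactly $\{v_{4i+1}, v_{4i+3}, v_{4i}, v_{4i+4}\}$, so at most four colours are forbidden, leaving at least $k+1-4 \ge 2$ available colours from its list. Because the vertices of $S$ are pairwise at $G$-distance at least $4$ (the 2-vertices on the cycle have only their cycle neighbours, so no off-cycle shortcuts shorten this), they are pairwise non-$G^2$-adjacent and can be coloured independently.

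For (C5), the plan is to delete all $2\ell$ two-vertices $S := \{v_{3i+1}, v_{3i+2} : 0 \le i \le \ell-1\}$, colour $(G-S)^2$ by minimality, and extend using 2-choosability of even cycles. For $v_{3i+1}$, the colored $G^2$-neighbours are $v_{3i}$, its at most $k-3$ off-cycle neighbours (note $v_{3i-1} \in S$ is uncoloured), and $v_{3i+3}$, totalling at most $k-1$ and leaving at least $2$ available; the same count holds by symmetry for $v_{3i+2}$. The induced subgraph of $G^2$ on $S$ is the cycle $v_1 v_2 v_4 v_5 \cdots v_{3\ell-2} v_{3\ell-1}$ of length $2\ell$, with alternating $G$-distance-1 and $G$-distance-2 edges. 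Since this cycle has even length, Erd\H{o}s--Rubin--Taylor guarantees 2-choosability and completes the extension.

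For (C6), write the cycle as $u_1 x_1 u_2 x_2 \cdots u_m x_m u_1$ (with 3-vertices $u_i$ and 2-vertices $x_i$), the pendant 1-thread as $u_1 - y - w$ (with $y$ a 2-vertex and $w$ a 3-vertex with $N(w) = \{y, w_1, w_2\}$), and let $z_i$ be the third $G$-neighbour of $u_i$ (so $z_1 = y$). The plan is to take $S := \{u_1, y, x_1, \ldots, x_m\}$, colour $(G-S)^2$ by minimality, and extend in the order $u_1, y, x_1, \ldots, x_m$. Vertex $u_1$ sees only the colored $G^2$-neighbours $u_m, u_2, w$, hence has at least $3$ available colours; once $u_1$ is colored, $y$ sees $u_1, w, w_1, w_2$ (exactly four, thanks to $w$ being a 3-vertex), so at least $2$ available; and each $x_i$ then has exactly the four colored $G^2$-neighbours $u_i, u_{i+1}, z_i, z_{i+1}$, giving at least $2$ available. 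The $x_i$'s induce a cycle of length $m$ in $G^2$ (consecutive $x_i, x_{i+1}$ are at $G$-distance 2 via $u_{i+1}$, and no chords appear); when $m$ is even, Erd\H{o}s--Rubin--Taylor finishes the extension.

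The main obstacle is the odd-$m$ case of (C6), where an odd induced $x$-cycle with size-$2$ lists need not be 2-choosable. The key observation is that such a cycle is still list-colourable whenever its lists are not all identical (given adjacent $L_i \ne L_{i+1}$ and $a \in L_i \setminus L_{i+1}$, setting $v_i = a$ makes the constraint on $v_{i+1}$ vacuous, and the resulting path extension has enough slack). To ensure this hypothesis, the plan is to exploit the three available colours for $u_1$: the lists $L(x_1)$ and $L(x_2)$ forbid the sets $\{c(u_1), c(u_2), c(z_2), c(y)\}$ and $\{c(u_2), c(u_3), c(z_2), c(z_3)\}$ respectively, and so coincide only if $\{c(u_1), c(y)\} = \{c(u_3), c(z_3)\}$. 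Since $u_1$ has 3 available colours while $\{c(u_3), c(z_3)\}$ has at most 2 elements, we can always choose $c(u_1) \notin \{c(u_3), c(z_3)\}$, which after colouring $y$ forces $L(x_1) \ne L(x_2)$ and thereby completes the extension.
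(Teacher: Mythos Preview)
Your argument for (C5) is correct and is essentially the paper's proof. But your proofs of (C4) and (C6) share a common gap: when you delete a set $S$ and colour $(G-S)^2$ by minimality, the resulting colouring is a valid partial colouring of $G^2$ only if every edge of $G^2$ between two vertices of $V(G)\setminus S$ is also an edge of $(G-S)^2$. This fails whenever some $s\in S$ is the unique common $G$-neighbour of two vertices that remain outside $S$.

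In (C4), $v_{4i+2}\in S$ is the unique common neighbour of $v_{4i+1}$ and $v_{4i+3}$, both of which stay in $G-S$. Hence the $(G-S)^2$-colouring may assign them the same colour, and then the final colouring is improper on the $G^2$-edge $v_{4i+1}v_{4i+3}$ no matter how you colour $v_{4i+2}$. The paper instead removes \emph{all} $2$-vertices of the cycle (so that every such conflict lies entirely inside $S$); it then observes that the odd-indexed $v_i$ induce an even $(2\ell)$-cycle in $G^2$ with at least two available colours each, colours them using the $2$-choosability of even cycles, and finishes greedily on the $v_{4i+2}$.

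In (C6), each $x_i\in S$ is the unique common neighbour of $u_i$ and $u_{i+1}$, and for $2\le i\le m-1$ both of these lie outside $S$. So the $(G-S)^2$-colouring may give $c(u_i)=c(u_{i+1})$, which is already improper in $G^2$; your careful extension to the $x_j$ cannot repair this. The paper sidesteps the issue by deleting only $N[u_1]=\{u_1,x_m,x_1,y\}$ (here no $G^2$-edges among the remaining vertices are lost), colouring the square of the rest, and then uncolouring the remaining $x_i$ before colouring $u_1$. At that point the uncoloured $2$-vertices $\{x_1,\ldots,x_m,y\}$ induce in $G^2$ an $m$-cycle together with the triangle $x_1x_my$ sharing the edge $x_1x_m$; since $x_1$ and $x_m$ each have at least three available colours and every other vertex at least two, Vizing's degree-choosability theorem applies. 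Your odd-cycle trick is clever but becomes unnecessary once $y$ is folded into the structure being list-coloured.
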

\begin{proof} See Figure~\ref{f.3} for illustration of configurations (C4)--(C6).
Remove all 2-vertices of (C4), and by minimality color the square of the
resulting graph. Now the subgraph of $G^2$ induced by all $v_i$ with $i$ odd is
a $2\l$-cycle.  Each of these vertices has at least two available colors (and
even cycles are 2-choosable~\cite{ERT}), so we
extend the coloring to them.  Finally, we color the $v_i$ with $4\,|\,(i+2)$,
each of which has an available color.

Remove all 2-vertices of (C5), and by minimality color the square of the
resulting graph. Now the subgraph of $G^2$ induced by all uncolored $v_i$ is
a $2\l$-cycle.  Each of these vertices has at least two available colors, so we
extend the coloring to $G^2$.

In (C6), let $v$ be a 3-vertex on the cycle with 3 incident 1-threads leading to
3-vertices.  By minimality, we can color $(G\setminus N[v])^2$.
Now uncolor all 2-vertices on the cycle and color $v$.
%
Notice that in $G^2$ the 2-vertices of (C6) induce two cycles that share an
edge, and one of these two cycles has length 3. Moreover the two vertices
that belongs to both cycles have at least 3 available colors, and all others
have at least 2 colors. By Vizing's degree-choosability
theorem~\cite{vizing}, we can extend this coloring to these vertices.
We should also mention the case where the 2-neighbor $u$ of $v$ not on the
cycle has its other 3-neighbor also on the cycle.  In this case, in $G^2$ the
2-vertices induce a cycle with one additional vertex adjacent to four cycle
vertices.  Again we can complete the coloring by Vizing's degree-choosability
theorem.
\end{proof}

Now we construct examples to show that 
%
the threshold $2+\frac{4k-8}{5k+2}$ in the Main Theorem cannot be
improved without adding new reducible configurations (or taking a completely
different approach for the proof).
We construct examples with maximum degree $k$ and $\mad$ arbitrarily close to
$2+\frac{4k-8}{5k+2}$ that do not contain any of the above reducible configurations.
(In fact, our proof of the Main Theorem does use some additional reducible
configurations of bounded size, but none of them appear in our examples either.)
Example 1 is tight for $\Delta\in\{4,5\}$ and Example 2 is tight for $\Delta\ge 6$.

\paragraph{Example 1.} Let $G$ be a bipartite graph with vertices in part $A$
of degree $k-2$ and vertices in part $B$ of degree $k-3$. Subdivide each edge
of the graph twice.  Now add a spanning cycle $C_1$ through the vertices of $A$
and a spanning cycle $C_2$ through the vertices of $B$.  Subdivide each edge of
$C_1$ three times, and subdivide each edge of $C_2$ twice.
The average degree of this graph is $3-(7k-18)/(2k^2-3k-6)$, which is $2+2/7$
and $2+12/29$ for $k=4$ and $k=5$ respectively.  However, if we contract just
one edge on what was $C_1$ and one edge on what was $C_2$, we get a graph with
none of the above reducible configurations.

\smallskip

\paragraph{Example 2.}
Begin with a $(k-2)$-regular graph on a set $A$ of $2M$ vertices (for arbitrary
fixed $M$) and an independent set $B$ of size $M$.
Subdivide each edge incident to $A$ five times and add one edge from the center
vertex of each resulting 5-thread to a vertex of $B$ so that each vertex of $B$
now has degree $k-2$.  Add a spanning cycle $C_1$ through the vertices of $A$
and a spanning cycle $C_2$ through the vertices of $B$.  Finally, subdivide
each edge of $C_1$ and $C_2$ three times.  The average degree of this graph is
$2+(4k-8)/(5k+2)$.  If we contract one edge each on what was $C_1$ and $C_2$,
the resulting graph has none of the reducible configurations above.  Further,
it contains only vertices of degrees 2, 3, and $k$.

\smallskip

We suspect that in this way we can construct graphs with arbitrarily high
girth--probably we can adapt the construction of regular graphs with
arbitrary degree and arbitrary girth. If so, then any set of reducible
configurations that appears in all graphs formed by this construction must
contain new arbitrarily large reducible configurations.

\section{Maximum degree 5}

\begin{thm}
\label{T4}
If $\Delta\le 5$ and $\mad(G)<12/29$, then $\chil(G^2)\le 6$.
\end{thm}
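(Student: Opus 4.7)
The plan is to apply the discharging method to a vertex-minimum counterexample $G$, following the template used for the Main Theorem. I read the hypothesis as $\mad(G)<2+\tfrac{12}{29}=\tfrac{70}{29}$, the threshold that matches Example~1 of Section~2 at $k=5$. By minimality, $G$ avoids every configuration from Lemmas~\ref{l.2} and~\ref{l.3} with $k=5$: it has no $4$-thread, every $3$-thread has both endpoints of degree~$5$, every $2$-thread has endpoint-degree pair in $\{(3,5),(4,4),(4,5),(5,5)\}$, and the cyclic patterns of (C4)--(C6) cannot occur.

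I would next establish a short list of additional $6$-reducible configurations tailored to $\Delta=5$, by the usual scheme: remove a small set $S$, list-colour $(G-S)^2$ by minimality, and verify that each erased vertex has at least as many available colours as constraints. Natural candidates are $1$-threads whose two endpoints are both $3$-vertices (possibly with further conditions on the rest of their neighbourhoods) and local configurations around a $5$-vertex incident to many $2$-vertices. Their role is to plug gaps that the discharging alone cannot cover.

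Assign initial charge $\mu(v)=d(v)-\tfrac{70}{29}$, so in units of $\tfrac1{29}$ the deficits are $-12,17,46,75$ at degrees $2,3,4,5$, and $\sum_v\mu(v)<0$. A thread with $k$ internal $2$-vertices carries total deficit $12k/29$, to be supplied by its two $\ge 3$-vertex endpoints. The rules would take the form ``an endpoint $u$ of a thread $T$ sends an amount $\alpha(d(u),d(u'),|T|)$ to each $2$-vertex of $T$,'' with the constants tuned so that each $2$-vertex receives at least $12/29$, each $3$-vertex disburses at most $17/29$, and the $4$- and $5$-vertex budgets are respected. A plausible first attempt is: a $5$-vertex sends $18/29$ per incident $3$-thread, $12/29$ per side of a balanced $2$-thread, and absorbs the shortfall at any $1$- or $2$-thread whose other endpoint has degree below~$5$, while a $3$-vertex sends only the small residual amounts needed.

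The main obstacle is the $5$-vertex budget in extremal cases: a $5$-vertex incident to five $3$-threads would need to send $5\cdot 18/29=90/29>75/29$, exceeding its budget by $15/29$. Overcoming this calls for either a new reducible configuration that caps the number of $3$-threads at a single $5$-vertex, or a global-discharging step that shifts charge along long chains of $5$-vertices connected by $3$-threads, in the spirit of Borodin's technique cited in the introduction. A secondary tight spot is the $3$-endpoint of a $(3,5)$-type $2$-thread, where the $17/29$ budget barely covers one such incidence, so reducibility must be used to forbid a $3$-vertex from serving as the low endpoint of two such threads. Once all vertex budgets are verified, the combination $\sum_v\mu(v)<0$ with non-negative final charge everywhere gives the contradiction and completes the proof.
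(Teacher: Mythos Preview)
Your plan follows the paper's approach, but it stops exactly where the real content begins. You correctly isolate the bottleneck---a $5$-vertex incident to many $3$-threads---and you gesture at ``global discharging along chains of $5$-vertices joined by $3$-threads,'' yet you do not notice that configuration~(C4) already delivers this. By~(C4) the subgraph formed by the $3$-threads and their endpoints is a forest, so it can be oriented so that each $5$-vertex \emph{sponsors} at most one incident $3$-thread. The paper's rule is then: every $5$-vertex sends $13/29$ to each incident thread (or adjacent $3$-vertex) and an extra $10/29$ only to its sponsored $3$-thread, giving total outflow at most $5\cdot\tfrac{13}{29}+\tfrac{10}{29}=\tfrac{75}{29}$, precisely the budget. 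Your tentative rate of $18/29$ per $3$-thread is the wrong split; the asymmetry between ``base'' and ``sponsored'' charge is the whole point.

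The same sponsorship trick recurs at lower degrees, and your proposal misses it entirely. By~(C5), the $2$-threads with both endpoints of degree~$4$ form a forest, so each $4$-vertex sponsors at most one such $2$-thread (base $11/29$, extra $2/29$); by~(C6), the $1$-threads with $3$-vertices at both ends can likewise be sponsored (a $3$-vertex sends $12/29$ to its sponsored $1$-thread, $11/29$ to each incident $2$-thread, and only $1/29$ to a $1$-thread leading to a weak $4$-neighbor). You list ``$1$-threads with two $3$-endpoints'' as a candidate for a \emph{new} reducible configuration, not realising that~(C6) is already exactly the cyclic obstruction whose absence makes the sponsorship work. With these three sponsorship layers in place, the verification closes; only two small ad~hoc reductions at $3$-vertices are needed (a $3$-vertex with three $2$-neighbours and an incident $2$-thread; a $3$-vertex with one $3$-neighbour, one incident $2$-thread, and a second $2$-neighbour leading either to another $2$-thread or to a weak $3$-neighbour). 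Without pinning down the sponsorship mechanism your outline is not yet a proof.
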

\begin{proof}
Assume that the theorem is false and let $G$ be a minimal counterexample.
Note the following properties of $G$. By reducible configurations (C1) and
(C2), $G$ contains no 4-thread and each 3-thread has both endpoints of degree
5.  By configuration (C4), the subgraph induced by 3-threads is
acyclic.  Hence, we can assign each $5$-vertex to sponsor at most one incident
3-thread so that every 3-thread is sponsored.
Let $F$ denote the subgraph induced by vertices incident to (or on) 3-threads.  Let $v$
be a 5-vertex that is a leaf in $F$.  Assign $v$ to sponsor its incident
3-thread; now delete $v$ and its 3-thread, and recurse.

Now we use reducibility of (C5) and (C6). Similar to 5-vertices sponsoring 3-threads,
we assign to each 2-thread with 4-vertices at both ends an incident 4-vertex to
sponsor it.  Likewise, we assign to each 1-thread with 3-vertices at both ends an
incident 3-vertex to sponsor it.

\begin{figure}
\label{f.4}
\begin{center}
\begin{tabular}{cc}
   \includegraphics[scale=1]{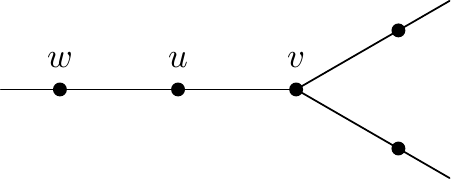}\hspace{.7cm}&
   \includegraphics[scale=1]{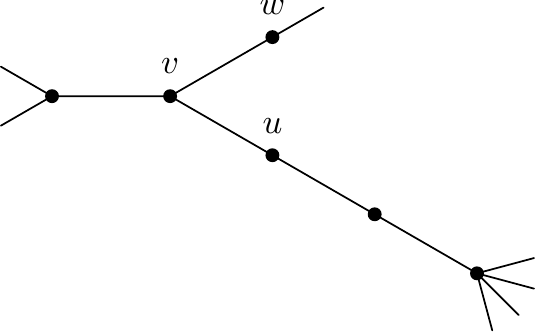}\\
    $(i)$ & $(ii)$
\end{tabular}
  \caption{Reducible configurations from Theorem~\ref{T4}}
\end{center}
\end{figure}

We use discharging with each vertex $v$ getting initial charge $d(v)$ and with
the following discharging rules.
Charge sent to a thread will be equally distributed among the 2-vertices of that
thread.
\begin{enumerate}
\item[(R1)]  Every 5-vertex sends charge $13/29$ to each incident thread or
adjacent 3-vertex, and it sends extra charge $10/29$ to its sponsored 3-thread
(if it exists).
\item[(R2)] Every 4-vertex sends charge $11/29$ to each incident thread or
adjacent 3-vertex, and it sends extra charge $2/29$ to its sponsored 2-thread
(if it exists).
\item[(R3)] Every 3-vertex sends charge $11/29$ to each incident 2-thread, it
sends charge $1/29$ to each incident 1-thread leading to a weak 4-neighbor, and
also it sends charge $12/29$ to its sponsored 1-thread (if it exists).
\end{enumerate}
Now we show that every vertex $v$ finishes with charge $\mu^*(v)$ at least $2+12/29$.
If $d(v)=5$, then $\mu^*(v)\ge 5-5(13/29)-(10/29)=2+12/29$. If $d(v)=4$, then
$\mu^*(v)\ge 4-4(11/29)-(2/29)=2+12/29$. Now we consider the two remaining
possibilities $d(v)=3$ and $d(v)=2$.

Suppose $d(v)=3$. Consider the possibility that $v$ is incident with three
threads. If all of them are 1-threads, then $\mu^*(v)\ge 3 - 12/29-2(1/29)>
2+12/29$.  If at least one is a 2-thread, then let the 2-thread be $uw$,
with $u$ adjacent to $v$. See Figure~\ref{f.4}$(i)$.
Remove $u$ and by minimality color $G-u$. Now recolor $v$ with a distinct
color from $w$ if necessary, and then color $u$.
So $v$ has a $3^+$-neighbor.

If $v$ has a $4^+$-neighbor, then $\mu^*(v)\ge
3+11/29-11/29-12/29> 2+12/29$.  Similarly, if $v$ has at least two
$3$-neighbors, then $\mu^*(v)\ge 3-12/29 > 2+12/29$.  So $v$ must have a
3-neighbor and two 2-neighbors.  If $v$ has a 3-neighbor and two incident
1-threads, then $\mu^*(v)\ge 3-12/29-1/29> 2+12/29$.  So $v$ must have
an incident 2-thread
and either another incident 2-thread or else an incident 1-thread leading to a
weak 3-neighbor (if it leads to a weak $4^+$-neighbor, then $v$ gives away at
most $12/29$).
Let $u$ be a neighbor of $v$ on a 2-thread and let $w$ be
$v$'s other 2-neighbor. See Figure~\ref{f.4}$(ii)$. By minimality, we can color
$G\setminus\{u,v,w\}$.  Now we color $v$, $w$, and $u$ in this order.

Finally, suppose $d(v)=2$.  We show that each $\l$-thread $P$ finishes with
charge at least $\l(2+12/29)$, so that each 2-vertex finishes with charge at
least $2+12/29$.  If $\l=3$, then $P$ gets charge $13/29$ from each endpoint
and charge $10/29$ from its sponsor, so $\mu^*(P)\ge
6+2(13/29)+10/29=6+36/29=3(2+12/29)$.  Suppose $\l=2$.  If $P$ has a 5-vertex as
an endpoint, then $\mu^*(P)\ge 4+ 13/29+11/29=2(2+12/29)$.  If $P$ has two
4-vertices as endpoints, then $\mu^*(P)\ge 4 +2(11/29)+2/29=4+24/29=2(2+12/29)$.
Since (C3) is reducible, we are in one of these cases.
Finally, suppose $\l=1$.
If $P$ has a 5-vertex endpoint, then $\mu^*(P)\ge 2+13/29$.  If $P$ has a
4-vertex endpoint, then $\mu^*(P)\ge 2+11/29+1/29=2+12/29$.  If $P$ has two
3-vertex endpoints, then $P$ gets $12/29$ from its sponsor, so $\mu^*(P)\ge
2+12/29$.

So each vertex finishes with charge at least $2+12/29$.  This contradicts the
fact that $\mad(G)<2+12/29$, and thus completes the proof.
\end{proof}

%
%
%

\section{Maximum degree 6}

\begin{thm}
\label{T5}
If $\Delta\le 6$ and $\mad(G)<5/2$, then $\chil(G^2)\le 7$.
\end{thm}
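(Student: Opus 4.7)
The plan is to mirror the structure of Theorem~\ref{T4}, now with $k=6$ and target $\mad(G)<5/2$. Assume the theorem fails and let $G$ be a minimum counterexample. From Lemmas~\ref{l.2} and~\ref{l.3} specialized to $k=6$, I read off the structure of $G$: by (C1), $G$ has no 4-thread; by (C2), every 3-thread has both endpoints of degree $6$; by (C3), every 2-thread either has both endpoints of degree at least $5$ or has a 6-vertex as one endpoint and a $4^-$-vertex as the other; and by (C4)--(C6), the subgraph of 3-threads between 6-vertices is acyclic, 2-threads between two $5^-$-vertices do not form a cycle, and the 1-thread configuration (C6) is forbidden. As in Theorem~\ref{T4}, these acyclicity statements let me greedily designate a distinct \emph{sponsor} vertex for each 3-thread (a 6-vertex endpoint), each $(5,5)$-2-thread (a 5-vertex endpoint), and each 1-thread joining two 3-vertices (a 3-vertex endpoint).

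I then set up discharging with initial charge $\mu(v)=d(v)$ and target $\mu^*(v)\ge 5/2$ at every vertex, with charge sent to a thread split equally among its 2-vertices. A natural first attempt at rules is: (R1) each 6-vertex sends $1/2$ to each incident thread or adjacent 3-vertex, with an additional $1/2$ to its sponsored 3-thread and a small extra contribution to any incident 2-thread whose other endpoint has degree at most $4$; (R2) each 5-vertex sends $1/2$ to each incident thread or adjacent 3-vertex; (R3) each 4-vertex sends some amount at most $3/8$ to each incident thread or adjacent 3-vertex; (R4) each 3-vertex sends a small amount to each incident 2-thread and to its sponsored 1-thread. The constants must be tuned so that every $\ge 4$-vertex retains at least $5/2$ units of charge and every $\ell$-thread accumulates the required $\ell/2$ extra units from its endpoints and sponsor. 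Verification of the $6$-, $5$-, and $4$-vertex cases, together with each of the thread types $(5,5)$, $(5,6)$, $(6,6)$, $(6,4)$, $(6,3)$, and the $3$-thread, should then be routine arithmetic once the constants are fixed.

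The main obstacle, exactly as in Theorem~\ref{T4}, will be the 3-vertex case: a 3-vertex starts with only $3$ units of charge and has a mere $1/2$ unit of slack, so the rules cannot afford for $v$ to send charge generously to every neighbor. The hard subcases are those in which $v$ has several demanding neighbors --- multiple 2-neighbors on short threads, or a mix of 2- and 3-neighbors --- with no $6$-vertex nearby to supply the deficit. I expect the base rules to fall short by a small amount in such subcases, and I would introduce one or two small ad hoc reducible configurations, analogous to those depicted in Figure~\ref{f.4} for Theorem~\ref{T4}, in which deleting $v$ together with one or two of its neighbors permits a greedy extension of a coloring of the remaining graph (typically by ordering the uncolored vertices so that each extension faces at most one forbidden color). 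Once these subcases are resolved, the 2-vertex case reduces to verifying that each thread collects its required $\ell/2$ units of charge, which follows directly from the tuned constants.
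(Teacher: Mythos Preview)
Your overall plan---minimum counterexample, structural lemmas, discharging toward $5/2$, then ad hoc reducible configurations for the tight low-degree cases---is the same architecture the paper uses, but your specific tuning does not balance, and the paper's route is considerably simpler than the one you are sketching.

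The concrete gap is at the $4$-vertex/$2$-thread interface. You propose that a $4$-vertex send at most $3/8$ in each direction so that $4-4(3/8)=5/2$. A $2$-thread needs a total of $1$ from its endpoints; if one endpoint is a $4$-vertex sending only $3/8$, then by (C3) the other endpoint is a $6$-vertex, which must supply $5/8$. But a $6$-vertex is already exactly tight: it has $6-5/2=7/2$ to give, and with $1/2$ in each of six directions plus $1/2$ to a sponsored $3$-thread that is $7/2$ on the nose. There is no slack for the ``small extra contribution'' you propose, so either the $6$-vertex or the $2$-thread ends below target. No choice of constants with $4$-vertices sending $<1/2$ to $2$-threads can fix this without overloading $6$-vertices.

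The paper avoids this by a cleaner partition: call degrees $5,6$ \emph{high} and degrees $3,4$ \emph{medium}. High vertices send $1/2$ in every direction (so a $5$-vertex ends exactly at $5/2$ and needs no sponsoring machinery---your use of (C5) for $(5,5)$-$2$-threads is unnecessary here). Medium vertices send $1/2$ to each incident $2$-thread and $1/4$ to each incident $1$-thread whose other endpoint is medium; they send nothing to $1$-threads with a high endpoint. This makes every thread happy directly, with no extra sponsoring beyond (C4) for $3$-threads. The price is that a $4$-vertex \emph{can} drop below $5/2$, namely when it sends $1/2$ in at least three directions; the paper disposes of this with a single reducible configuration (delete $N[v]$, color, then greedily extend). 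So you should expect ad hoc reducibility not only for $3$-vertices but also for $4$-vertices; confining the reducible subcases to degree $3$, as you suggest, is what forces the unworkable $3/8$.
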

\begin{proof}
Assume to the contrary that the theorem is false and let $G$ be a minimal
counterexample.  A vertex is \emph{high} if its degree is 5 or 6, it is
\emph{medium} if its degree is 3 or 4, and it is \emph{low} otherwise.  By
reducible configurations (C1) and (C2), $G$ contains no 4-thread and each
3-thread has both endpoints of degree 6.  By configuration (C4), the
subgraph induced by 3-threads is acyclic.  Hence, we can assign each $6$-vertex
to sponsor at most one incident 3-thread so that every 3-thread is sponsored.

\begin{figure}[b]
\label{f.5}
\begin{center}
   \begin{tabular}{cccc}
     \includegraphics[scale=.95]{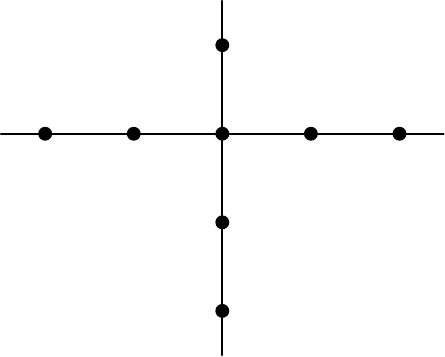}\hspace{.3cm}&
     \includegraphics[scale=.95]{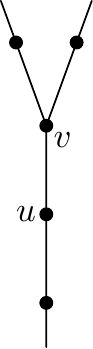}\hspace{.3cm}&
     \includegraphics[scale=.95]{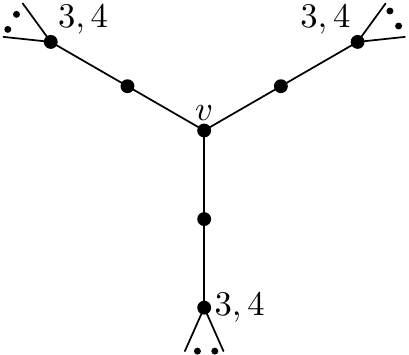}\hspace{.3cm}&
     \includegraphics[scale=.95]{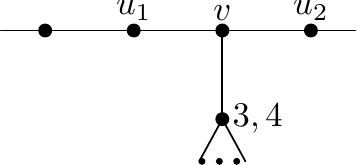}\\
       $(i)$ & $(ii)$ & $(iii)$  & $(iv)$
   \end{tabular}
  \caption{Reducible configurations from Theorem~\ref{T5}}
\end{center}
\end{figure}

We use discharging with each vertex $v$ getting initial charge $d(v)$ and with
the following discharging rules.
\begin{enumerate}
\item[(R1)] Every high vertex sends charge $1/2$
in each direction.
\item[(R2)]  Every 6-vertex sends extra charge $1/2$ to its sponsored 3-thread (if it
exists).
\item[(R3)]  Every medium vertex sends charge $1/2$ to each incident 2-thread,
and it sends charge $1/4$ to each incident 1-thread with other endpoint a medium vertex.
\end{enumerate}
We now show that every vertex $v$ finishes with final charge $\mu^*(v)$ at least
$5/2$, which will establish the theorem.

If $d(v)=6$, then $\mu^*(v)\ge 6-6(1/2)-1/2=5/2$.  If $d(v)=5$, then
$\mu^*(v)\ge 5-5(1/2)=5/2$.  If $d(v)=4$, then $v$ finishes with charge at
least $5/2$ unless $v$ gives charge in all 4 directions and gives charge $1/2$
in at least 3 of these directions; see Figure~\ref{f.5}$(i)$.  By minimality,
we can color $G\setminus N[v]$.  Now we easily color $v$'s neighbor on a
1-thread (if one exists), followed by $v$,
followed by its remaining neighbors. This produces a proper coloring
of $G^2$. So the cases left to consider are $d(v)=3$ and $d(v)=2$.

Suppose $d(v)=3$.  Consider first the possibility that $v$ has three
2-neighbors and is incident to at least one 2-thread.  Let $u$ denote its
neighbor on a 2-thread; see Figure~\ref{f.5}$(ii)$.  By minimality, we can
color $G-u$.  To extend the coloring to $G$, first recolor $v$ to
avoid the color on $u$'s other neighbor, then color $u$.  If $v$ has three
2-neighbors and at least one high weak neighbor, then $\mu^*(v)\ge 3 -
2(1/4)=5/2$.  So suppose $v$ has three 2-neighbors and all its weak neighbors
are medium. See Figure~\ref{f.5}$(iii)$.  By minimality, we color $G\setminus
N[v]$. Now color the vertices in $N(v)$ in arbitrary order;
finally, color $v$.  Thus, we may assume that $v$ has a $3^+$-neighbor.

If $v$ has a $5^+$-neighbor,
then $\mu^*(v)\ge 3 +1/2-2(1/2)=5/2$.  So $v$ has a medium neighbor.  If $v$ has
two or more medium neighbors, then $\mu^*(v)\ge 3 - 1/2=5/2$.  So suppose $v$
has one medium neighbor and two 2-neighbors.  If $v$ has two incident 1-threads,
then $\mu^*(v)\ge 3-2(1/4)=5/2$.  So now assume that $v$ has an incident
2-thread and the other incident thread is either a 2-thread or a 1-thread
leading to a medium weak neighbor.
Let $u_1$ be a neighbor of $v$ on a 2-thread and let $u_2$ be the other 2-neighbor
of $v$. See Figure~\ref{f.5}$(iv)$. By minimality, we can color
$G\setminus\{u_1,u_2,v\}$.  Now we color $v$, $u_2$, $u_1$ in this order.

Finally, suppose $d(v)=2$.  We show that each $\l$-thread $P$ receives charge at
least $\l/2$, so that each 2-vertex finishes with charge $5/2$.  If $\l=3$, then
$P$ receives $1/2$ from each endpoint and an additional $1/2$ from its sponsor.
If $\l=2$, then $P$ receives charge $1/2$ from each endpoint.  If $\l=1$, then
either $P$ receives charge $1/2$ from a high endpoint or $P$ receives charge
$1/4$ from both medium endpoints.

Thus, each vertex finishes with charge at least $5/2$.  This contradicts the
fact that $\mad(G)<5/2$, and thus completes the proof.
\end{proof}

%
%
%

\section{Maximum degree 7}

\begin{thm}
\label{T6}
If $\Delta\le 7 $ and $\mad(G)<2+20/37$, then $\chil(G^2)\le 8$.
\end{thm}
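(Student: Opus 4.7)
My plan follows the template of Theorems~\ref{T4} and~\ref{T5}: I would assume for contradiction that $G$ is a minimum counterexample, derive thread-structural constraints from the reducible configurations of Lemmas~\ref{l.2} and~\ref{l.3} (applied with $k=7$), run local discharging with target $\mu^*(v) \ge 2+20/37 = 94/37$, and reach a contradiction with $\mad(G)<94/37$. By (C1), $G$ has no $4$-thread; by (C2), every $3$-thread has both endpoints of degree $7$; and by (C3), each $2$-thread has endpoint pair in $\{(3,7),(4,7),(5,7),(6,6),(6,7),(7,7)\}$. Configuration (C4) makes the auxiliary graph of $3$-threads between $7$-vertices a forest, so the leaf-peeling argument of Theorems~\ref{T4}--\ref{T5} assigns each $3$-thread a sponsoring $7$-endpoint; (C5)--(C6) permit analogous sponsorship among cycles of shorter threads.

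The natural rates are: each $7$-vertex sends $21/37$ per direction (to each incident thread or each adjacent $6^-$-neighbor) plus an extra $18/37$ to its sponsored $3$-thread; each $6$-vertex sends $20/37$ per direction; and each medium vertex of degree $3$, $4$, or $5$ sends carefully tuned amounts to incident $2$-threads and smaller amounts to $1$-threads whose other endpoint is also medium. These rates are chosen so that a $7$-vertex finishes with exactly $7-7(21/37)-18/37=94/37$; a sponsored $3$-thread receives $2(21/37)+18/37=60/37=3(20/37)$, enough for its three internal $2$-vertices; and a $2$-thread with endpoint pair in $\{(7,7),(6,7),(6,6)\}$ receives at least $40/37=2(20/37)$. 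The verifications for $d(v)\in\{6,7\}$ and for these thread endpoint pairs then reduce to straightforward arithmetic.

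The crux---and what I expect to be the main obstacle---is handling the $2$-threads with a low-degree endpoint, namely $(3,7)$, $(4,7)$, or $(5,7)$, together with the tight budgets at medium vertices: a $3$-vertex has only $3-94/37=17/37$ of slack, yet a $(3,7)$ $2$-thread requires it to match the $21/37$ from the $7$-endpoint with $19/37$ of its own. I would close this gap exactly as in Theorem~\ref{T5}: for every locally tight configuration, introduce one additional small reducible configuration (in the spirit of Figures~4--5) consisting of the offending medium vertex together with its nearby threads, and verify reducibility by deleting a bounded set of vertices, coloring the rest by minimality, and greedily extending in $G^2$ using that each deleted vertex has at most $7$ constraints among its $8$ colors. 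An alternative---often cleaner---device is to let the $7$-endpoint of a bad $2$-thread sponsor that thread (sending a small $\varepsilon$-charge), provided the total outflow still fits within the $7$-vertex budget of $7-94/37=165/37$; combining these mechanisms with a case-by-case inspection of $d(v)\in\{3,4,5\}$ (mirroring the $d(v)=3$ analysis of Theorem~\ref{T5}) should yield $\mu^*(v)\ge 94/37$ for every vertex, contradicting $\mad(G)<2+20/37$ and completing the proof.
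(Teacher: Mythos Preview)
Your plan matches the paper's approach: a minimal counterexample, thread constraints from (C1)--(C4), sponsorship of 3-threads by 7-vertices, and ad-hoc small reducible configurations to dispose of the tight low-degree cases. Two concrete points need adjustment, however.

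First, the rate $20/37$ you propose for 6-vertices is too small; the paper groups 6-vertices with 7-vertices as \emph{high}, each sending $21/37$ per direction. This matters in the case of a 3-vertex $v$ with two incident 2-threads (both with 7-vertex far endpoints, by (C3)) and third neighbor $w$ a 6-vertex. Here $v$ must send $19/37$ to each 2-thread, and with only $20/37$ incoming from $w$ one gets $\mu^*(v)=3+20/37-38/37=93/37<94/37$. Nor is this configuration reducible: after deleting $v$ and its two 2-neighbors and coloring the rest, $v$ sees up to $1+5+2=8$ colored $G^2$-neighbors against a list of size~8, so the greedy extension can fail. The extra $1/37$ from $w$ is exactly what closes the gap, and 6-vertices can afford it since $6-6(21/37)=96/37>94/37$.

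Second, your alternative device of having a 7-endpoint sponsor a bad 2-thread with an extra $\varepsilon$-charge is infeasible: $7-7(21/37)-18/37=94/37$ exactly, so 7-vertices have zero slack after the basic rules. All the balancing for 2-threads with a $5^-$-endpoint must therefore come from that low endpoint itself (sending $19/37$ or $20/37$), with the tight situations at degrees 3, 4, 5 eliminated one by one via the small reducible configurations you anticipate; this is precisely what the paper does. Also, neither (C5) nor (C6) nor any shorter-thread sponsorship is used in the paper's argument for $k=7$; only 3-thread sponsorship via (C4) is needed.
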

\begin{proof}
Assume to the contrary that the theorem is false and let $G$ be a minimal
counterexample.  Again, we use discharging.  A vertex is \emph{high} if
its degree is 6 or 7, it is \emph{medium} if its degree is 4 or 5, and
it is \emph{low} otherwise.

By reducible configurations (C1) and (C2), $G$ contains no 4-thread and each
3-thread is incident to $7$-vertices at both ends.  By (C4), the subgraph
induced by 3-threads is acyclic.  Hence, we can assign each $7$-vertex
to sponsor at most one incident 3-thread so that every 3-thread is sponsored.

We use discharging with each vertex $v$ getting initial charge $d(v)$ and
with the following discharging rules.
\begin{enumerate}
\item[(R1)] Every high vertex sends charge $21/37$
in each direction.
\item[(R2)] Every 7-vertex sends extra charge $18/37$ to its sponsored 3-thread (if
it exists).
\item[(R3)] Every 5-vertex sends charge $20/37$ to each incident 2-thread, and
it sends charge $10/37$ in each direction that does not lead to a 2-thread.
\item[(R4)] Every 4-vertex sends charge $20/37$ to each incident 2-thread,
$10/37$ to each incident 1-thread, and $4/37$ to each adjacent 3-vertex.
\item[(R5)] Every 3-vertex sends charge $19/37$ to each incident 2-thread, and
$10/37$ to each incident 1-thread leading to a weak $5^-$-neighbor.
\end{enumerate}

We now show that every vertex finishes with charge $\mu^*(v)$ at least $2+20/37$.

\begin{figure}[tb]
\label{f.7}
\begin{center}
   \begin{tabular}{ccc}
     \includegraphics[scale=1]{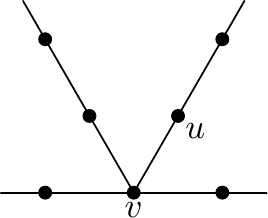}\hspace{.5cm}&
     \includegraphics[scale=1]{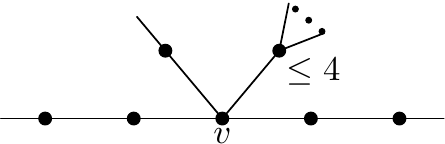}\hspace{.5cm}&
     \includegraphics[scale=1]{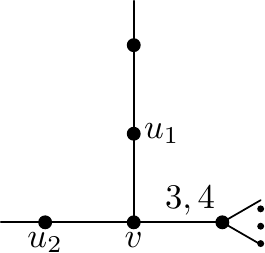}\\
     $(i)$ & $(ii)$ & $(iii)$ \\
   \end{tabular}

   \caption{Reducible configurations from Theorem~\ref{T6}}
\end{center}
\end{figure}

If $d(v)=7$, then $\mu^*(v)\ge 7-7(21/37)-(18/37)=2+20/37$.  If $d(v)=6$, then
$\mu^*(v)\ge 6-6(21/37)=2+22/37$.
If $d(v)=5$, then $\mu^*(v)\ge 5-4(20/37)-1(10/37)=2+21/37$ unless $v$ has five
incident 2-threads.  In this case $N[v]$ is
reducible.  By minimality, we color $G\setminus N[v]$. Then we
easily color $v$, followed by its neighbors.

Suppose $d(v)=4$. If $v$ has at most one incident 2-thread, then $\mu^*(v)\ge
4-20/37-3(10/37)=2+24/37$.  Similarly, if $v$ has two incident 2-threads and
only one incident 1-thread, then $\mu^*(v)\ge4-2(20/37)-10/37-4/37=2+20/37$. 
So either $v$ has two incident 2-threads and two incident 1-threads or $v$ has
at least three incident 2-threads.
Consider the first case, shown in Figure~\ref{f.7}$(i)$. Let $u$ be a neighbor
of $v$ on a 2-thread. By minimality, we color $G-u$. Now we uncolor $v$, then
color $v$ and $u$, in that order. Consider now the second case, where $v$ has
at least three incident 2-threads.  If $v$
has a $5^+$-neighbor, then $\mu^*(v)\ge 4+10/37-3(20/37)=2+24/37$.  If
$v$'s fourth neighbor is instead a $4^-$-vertex, then color $G-v$ by
minimality. See Figure~\ref{f.7}$(ii)$. Uncolor three neighbors of $v$ on
2-threads; now color $v$, followed by its uncolored neighbors.

Suppose $d(v)=3$.
If $v$ has three 2-neighbors and at least one of them, $u$, is on a 2-thread,
then color $G-u$ by minimality; recolor $v$ to avoid the color on $u$'s other
neighbor, then color $u$.  If $v$ has three incident 1-threads and each
leads to a weak $6^+$-neighbor, then $\mu^*(v)=3-0>2+20/37$.  If $v$ has three
incident 1-threads and one of them, with internal vertex $u$, leads to a weak
$5^-$-vertex, then color $G-u$ by minimality.  Now color $u$, then recolor $v$.
 So $v$ has at most two 2-neighbors.

If $v$ has exactly one 2-neighbor and it lies on a 1-thread, then $\mu^*(v)\ge
3-10/37 = 2 + 27/37$.
Suppose $v$ has exactly one 2-neighbor and it lies on a 2-thread.
If either of its other neighbors is a $4^+$-vertex, then $\mu^*(v)\ge
3-19/37+4/37=2+22/37$.  Otherwise, let $u$ denote the 2-neighbor.  By
minimality, color $G-u$.  Recolor $v$ to avoid the color on $u$'s other
neighbor, then color $u$.  So $v$ must have exactly two 2-neighbors.

Suppose that $v$ has exactly two 2-neighbors, $u_1$ and $u_2$.
If $v$'s third neighbor $w$ is a high vertex, then $\mu^*(v)\ge
3+21/37-2(19/37)=2+20/37$, so $w$ must be a $5^-$-vertex.
If both $u_i$ lie on 2-threads, then color $G\setminus\{v,u_1,u_2\}$.  Now
color $v$, $u_1$, then $u_2$.
Suppose instead that $v$ has exactly one incident 2-thread and that $u_1$ is
its neighbor on the 2-thread.  If $v$'s $3^+$-neighbor $w$ is a
$4^-$-vertex, then color $G-u_1$ by minimality; see Figure~\ref{f.7}$(iii)$. Now,
recolor $v$ to avoid the color on the other neighbor of $u_1$, then color $u_1$;
so $w$ must be a 5-vertex.
If $v$'s weak neighbor along the 1-thread is a $6^+$-vertex, then $v$ sends no
charge along the 1-thread so $\mu^*(v)\ge 3+10/37-19/37=2+28/37$.  Otherwise
this weak neighbor via the 1-thread containing $u_2$ is a $5^-$-vertex.  Color
$G\setminus\{v,u_1,u_2\}$ by minimality; now color $v$, $u_2$, and $u_1$, in
that order.

Suppose $v$ has exactly two incident 1-threads.  If $v$'s third neighbor is a
$4^+$-vertex, then $\mu^*(v)\ge 3+4/37-2(10/37)=2+21/37$.  Similarly, if either
of $v$'s weak neighbors is high, then $\mu^*(v)\ge 3-10/37 =2+27/37$.  Thus,
assume that $v$'s 2-neighbors $u_1$ and $u_2$ each lead to weak $5^-$-neighbors
and that it's third neighbor is a $3$-vertex.  Color
$G\setminus\{v,u_1,u_2\}$ by minimality.  We can now color $u_1$, $u_2$, and
$v$, in that order.

Finally, suppose $d(v)=2$.  We show that each $\l$-thread $P$ receives charge at
least $20\l/37$, so that each 2-vertex finishes with charge $2+20/37$.  If $\l=3$, then
$\mu^*(P)=6+2(21/37)+18/37=3(2+20/37)$.
If $\l=2$ and one endpoint of $P$ is a 3-vertex, then by reducible configuration
(C3) the other endpoint must be a 7-vertex, so $\mu^*(P)=4+21/37+19/37=2(2+20/37)$.
If $\l=2$ and both endpoints of $P$ are $4^+$-vertices, then $\mu^*(P)\ge4+2(20/37)=2(2+20/37)$.
If $\l=1$, then $\mu^*(P)\ge 2+2(10/37)=2+20/37$.

Thus, each vertex finishes with charge at least $2+20/37$.  This contradicts the
fact that $\mad(G)<2+20/37$, and so completes the proof.
\end{proof}

%
%
%

\section{Maximum degree at least 8}

\begin{thm}
\label{thm8}
For $k\ge 8$, if $\Delta\le k$ and $\mad(G)<2+\frac{4k-8}{5k+2}$, then
$\chil(G^2)\le k+1$.
\end{thm}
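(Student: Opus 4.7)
The plan is to mimic the discharging framework of Theorems~\ref{T5} and~\ref{T6}. Let $\alpha = \frac{4k-8}{5k+2}$ and let $G$ be a minimum counterexample. From Lemmas~\ref{l.2} and~\ref{l.3}, $G$ contains no 4-thread; every 3-thread has both endpoints of degree $k$; the subgraph induced by the 3-threads is acyclic; and the analogous acyclicity properties from (C5), (C6) hold for 2- and 1-thread subgraphs with only medium or low endpoints. Iteratively stripping leaves from the 3-thread forest assigns each 3-thread a distinct sponsoring $k$-vertex endpoint, and we likewise select sponsors for 2-threads and 1-threads that occur between medium or low vertices.

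Set $\beta = \frac{5k-14}{5k+2}$ and $\gamma = \frac{2k+4}{5k+2}$. A direct computation (using $\alpha = \frac{4(k-2)}{5k+2}$) shows these are the unique values for which $k\beta + \gamma = k - (2+\alpha)$ and $2\beta + \gamma = 3\alpha$. Thus a $k$-vertex that sends $\beta$ in each of its $k$ directions plus an additional $\gamma$ to its sponsored 3-thread retains exactly $2+\alpha$, while that sponsored 3-thread collects $3(2+\alpha)$. The discharging rules I would use are: (R1) every high vertex sends $\beta$ in each direction; (R2) every $k$-vertex sends an additional $\gamma$ to its sponsored 3-thread; (R3) every medium vertex sends prescribed charges to each incident 2-thread and 1-thread, together with a small charge across edges to adjacent 3-vertices; and (R4) every 3-vertex sends $\alpha$ to each incident 2-thread plus a smaller charge to each incident 1-thread that leads to a medium-or-low weak neighbor, with an extra sponsor boost for 1-threads between two 3-vertices. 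The specific constants in (R3)--(R4) (analogues of the $19/37,\,10/37,\,4/37$ of Theorem~\ref{T6}) must be tuned as linear functions of $\alpha$ so that vertices outside the reducible cases retain at least $2+\alpha$.

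Verification then proceeds by cases on $d(v)$. The $k$-vertex case is exact by the identities above, and high-but-not-$k$ vertices gain a small surplus from the missing sponsor boost. For each medium degree $d\in\{4,\dots,k-1\}$ the rules give $\mu^*(v)\ge 2+\alpha$ outside a short list of extremal neighborhoods; each such neighborhood is shown reducible by deleting one or two vertices of $N[v]$, coloring the remainder by the minimality of $G$, and extending greedily in a carefully chosen order, exactly as in Theorems~\ref{T5} and~\ref{T6}. The $d(v)=3$ analysis refines by the number of 2-neighbors and the types of the incident threads, and each residual sub-case is handled by the same kind of small-deletion argument as in the 3-vertex analyses of Theorems~\ref{T4}--\ref{T6}. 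Finally, for $d(v)=2$ we check thread by thread that every $\ell$-thread $P$ receives at least $\ell\alpha$ additional charge: $3$-threads by $2\beta+\gamma=3\alpha$; $2$-threads by (R3) together with (C3), which forbids two 3-vertex endpoints; and $1$-threads case by case on the endpoint types. Summing $\mu^*(v)\ge 2+\alpha$ over $V(G)$ contradicts $\mad(G)<2+\alpha$.

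The hard part will be designing (R3)--(R4) uniformly across the full range of medium degrees $4,\dots,k-1$, and then verifying reducibility of every configuration where the arithmetic alone leaves a vertex short of $2+\alpha$. In particular, the 3-vertex case analysis with mixed incident thread types will likely require a handful of ad hoc small reducible configurations beyond (C1)--(C6); each is proved by removing a carefully chosen 2- or 3-element subset of $N[v]$, coloring the rest by the minimality of $G$, and exhibiting a valid greedy extension. Choosing the right trade-off between charges sent to 2-threads, to 1-threads, and across edges to adjacent 3-vertices is where the bulk of the computation lies.
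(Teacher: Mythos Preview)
Your framework is right, and your $\beta=\frac{5k-14}{5k+2}$ and $\gamma=\frac{2k+4}{5k+2}$ coincide exactly with the paper's constants (the paper writes $\gamma$ as $3\alpha-2\beta$); rules (R1) and (R2) match the paper verbatim. Where your plan diverges is in how to handle the remaining degrees, and here you are missing the organising idea that makes the argument finite.

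The paper does \emph{not} assign sponsors to 2-threads or 1-threads, and does not invoke (C5) or (C6) in this proof. Instead it introduces a three-way split: \emph{high} means degree $k$ or $k-1$; \emph{medium} means degree between $k-2$ and $7-\lfloor\tfrac{16}{k+2}\rfloor$; \emph{low} means the $3^+$-vertices below that threshold (so low vertices have degree at most~$6$, independent of $k$). Every medium vertex sends a single fixed charge $2\alpha-\beta$ in every direction, and the threshold is chosen precisely so that $d-d(2\alpha-\beta)\ge 2+\alpha$ for all medium $d$; this dispatches the whole medium range by one inequality, with no casework. Two-threads need no sponsor because (C3) forces any 2-thread with a $(k-2)^-$-endpoint to have its other endpoint of degree $\ge k-1$, hence high, and then $\beta+(2\alpha-\beta)=2\alpha$ exactly.

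That leaves only the low degrees $3,4,5,6$---a bounded list, independent of $k$---to be treated individually. Low vertices get their own rule: $2\alpha-\beta$ to each incident 2-thread, $\alpha/2$ to each 1-thread with a low weak neighbour, and $\beta-\alpha$ to each 1-thread with a medium weak neighbour; there is also a small auxiliary rule (relevant only for $k\le 10$) whereby 3-vertices receive $8/(5k+2)$ from adjacent 5-vertices and $4/(5k+2)$ from adjacent 4-vertices. The ensuing case analysis for degrees $3$--$6$ is genuinely long and uses many ad hoc reducible neighbourhoods of the sort you anticipate, but the point is that it is \emph{finite}: the medium band absorbs all the $k$-dependence.

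Your plan to treat every $d\in\{4,\dots,k-1\}$ separately with tuned constants is not wrong in spirit, but without the medium/low cutoff you face an unbounded number of degree cases as $k$ grows, and the sponsor machinery you propose for 2- and 1-threads is an unnecessary detour once (C3) and the uniform medium rule are in place.
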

\begin{proof}
Suppose to the contrary that the theorem is false and let $G$ be a minimal
counterexample.  A $3^+$-vertex is \emph{high} if its degree is $k$ or $k-1$, it is
\emph{medium} if its degree is between $k-2$ and $7-\floor{\frac{16}{k+2}}$
(inclusive), and otherwise it is \emph{low}.
By reducible configurations (C1) and (C2), $G$ contains no 4-thread and each
3-thread is incident to $k$-vertices at both ends.  By configuration (C4), the
subgraph induced by 3-threads is acyclic.  Hence, we can assign each $k$-vertex
to sponsor at most one incident 3-thread so that every 3-thread is sponsored.

Let $\alpha=\frac{4k-8}{5k+2}$.  Let $\beta=\frac{(k-2)-4\alpha}{k-2}=1-\frac{16}{5k+2}$.
For $k\ge 8$, note that $\alpha/2>8/(5k+2)$ and $1 > \beta > \alpha > 2\alpha - \beta >
2/5 > \alpha/2 > 1/5 > \beta-\alpha$. (Verifying these inequalities is tedious,
but straightforward.)  The following equality also holds:
\begin{equation}  \label{i.3}
                2-16/(5k+2)= 5\alpha - 2\beta.
\end{equation}

We use discharging with each vertex $v$ getting initial charge $d(v)$ and with
the following discharging rules.

\begin{enumerate}
\item[(R1)] Every high vertex sends charge $\beta$ in each direction.
\item[(R2)] Every $k$-vertex sends charge $3\alpha-2\beta$ to its sponsored 3-thread (if it
exists).
\item[(R3)] Every medium vertex sends charge $2\alpha-\beta$ in each direction.
\item[(R4)] Every low vertex $v$ sends charge $2\alpha-\beta$ to each incident
2-thread (leading to a weak $k$-neighbor, by (C3), since $v$ is by definition a
$(k-2)^-$-vertex), $\alpha/2$ to each incident 1-thread leading to a low weak
neighbor, and $\beta-\alpha$ to each incident 1-thread leading to a medium weak
neighbor.
\item [(R5)] Every 3-vertex receives charge $8/(5k+2)$ from each adjacent 5-vertex and
charge $4/(5k+2)$ from each adjacent 4-vertex.\footnote{When $k\le 10$, in
some cases 3-vertices need more charge; that is the point of this rule.  When
$k\ge 11$ this rule is not needed, but since $8/(5k+2)$ and $4/(5k+2)$ rapidly
diminish to 0, this rule causes no problems.}
\end{enumerate}

We now show that every vertex finishes with charge at least $2+\alpha$.
This will contradict the fact that $\mad(G)<2+\alpha$, and thus prove the
theorem.

\medskip

\noindent
\textbf{Case:} $d(v)=k$. Now $\mu^*(v)\ge
k-k\beta-(3\alpha-2\beta)=k-(k-2)\beta-3\alpha=k-((k-2)-4\alpha)-3\alpha=2+\alpha$.

\medskip

\noindent
\textbf{Case:} $d(v)=k-1$. Now
$\mu^*(v)\ge (k-1)-(k-1)\beta=(k-1)-\beta-((k-2)-4\alpha)=1+4\alpha-\beta\ge 2+\alpha$.  

\medskip

\noindent
\textbf{Case:} {\em $v$ is medium}. Let $d=d(v)$. Now $\mu^*(v)\ge d-d(2\alpha-\beta)$.  This quantity is at least $2+\alpha$ when $d\ge (2+\alpha)/(1+\beta-2\alpha) = (7k-2)/(k+2)=7-\frac{16}{k+2}$.

\medskip

\noindent
\textbf{Case:} $d(v)=6$ {\em and $v$ is low}. If $v$ has at most five
2-neighbors, then $\mu^*(v)\ge 6-5(2\alpha-\beta) \ge 2+\alpha$; note that the
last innequlity is is equivalent to $4\ge 11\alpha - 5\beta$, i.e.\ it is
equivalent to $(26+k)/(2+5k)\ge0$.  If $v$ has at most four incident 2-threads,
then $\mu^*(v)\ge 6-4(2\alpha-\beta)-2(\alpha/2) \ge 2+\alpha$ by~(\ref{i.3}).
So now $v$ must have six 2-neighbors and at least five incident 2-threads.  Form $H$
from $G$ by deleting $v$ and each of its neighbors on a 2-thread.  Color $H$ by
minimality.  Now we can color $v$ (it has at most 7 restrictions on its color).
 Finally we can color each uncolored neighbor of $v$
(the last has at most 8 restrictions on its color).

\medskip

\noindent
\textbf{Case:} $d(v)=5$.
If $v$ has at most four 2-neighbors and at most three of them lie on 2-threads, then
$\mu^*(v)\ge 5 - 3(2\alpha-\beta) -\alpha/2 -8/(5k+2) \ge 2+\alpha$. This last inequality
simplifies to $16/(2+5k)\ge 0$, which obviously holds.
Similarly, if $v$ has five  2-neighbors at most one of which lies on a 2-thread, then
$\mu^*(v)\ge 5 -(2\alpha-\beta)-4(\alpha/2) \ge 2+\alpha$ and the last
inequality simplifies to $32/(2+5k)\ge 0$.  If $v$ has five
2-neighbors and at least two of them lie on 2-threads, then let $u_1$ and $u_2$ be
neighbors on 2-threads.  By minimality, color $G\setminus\{u_1,u_2\}$.  Now
recolor $v$ to avoid the colors on the neighbors of $u_1$ and $u_2$ ($v$
has at most 8 constraints on its color), then color $u_1$ and $u_2$.  Hence
$v$ must have exactly four 2-neighbors, all of which lie on 2-threads. If the final
neighbor of $v$ is a medium vertex, then it gives $v$ charge $2\alpha-\beta$.  So
$\mu^*(v)\ge 5-4(2\alpha-\beta)+(2\alpha-\beta)\ge 2+\alpha$; note that the
last inequality simplifies to $3\ge 7\alpha - 3\beta$ and is equivalent to
$2(10+k)/(2+5k)\ge 0$, which obviously holds for all $k$'s we consider.
So we can assume the final neighbor $u$ of $v$ is a $6^-$-vertex and gives no
charge to $v$.  For $k\le 14$, we get $\mu^*(v)\ge 5-4(2\alpha-\beta)-8/(5k+2)$,
which is bigger than $2+\alpha$  since $3-8/(5k+2)\ge 9\alpha - 4\beta$.
And for $k\ge 15$, we proceed as follows.
Color $G\setminus (N[v]-u)$ by minimality.  Now color $v$ (it has at most
10 constraints on its color), then color each uncolored neighbor of $v$ (the
last has at most 7 constraints on its color).

\begin{figure}
\label{f.8}
\begin{center}
  \begin{tabular}{cc}
    \includegraphics[scale=1]{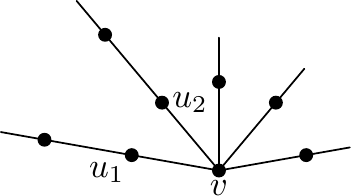}\hspace{.4cm}&\hspace{.4cm}
    \includegraphics[scale=1]{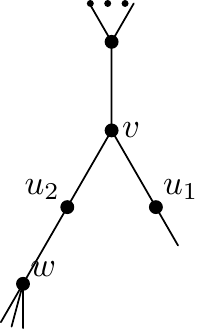}\\
     $(i)$ & $(ii)$
  \end{tabular}
 \caption{Reducible configurations from Theorem~\ref{thm8}}
\end{center}
\end{figure}

\medskip

\noindent
\textbf{Case:} $d(v)=4$.
Suppose $v$ has four 2-neighbors.  If $v$ has an incident 2-thread, then let $u$
denote $v$'s neighbor on the 2-thread.  By minimality, color $G-u$.
Now recolor $v$ to avoid the color on $u$'s other neighbor ($v$ has
at most 7 restrictions on its color), then color $u$ (which has at most 6
restrictions on its color).  So assume instead that $v$ has four incident 1-threads.
Now $\mu^*(v)\ge 4 - 4(\alpha/2)$.  This quantity is at least $2+\alpha$ when $k\le 14$.
So suppose $k\ge 15$.  If $v$ has a 1-thread leading to a low weak neighbor,
then let $u$ denote the neighbor on this 1-thread.  By minimality, color $G-u$.
Now recolor $v$ to avoid the color on $u$'s other neighbor ($v$ has
at most 7 restrictions on its color), then color $u$ (which has at most 10
restrictions).  So suppose instead that each of the four 1-threads
incident to $v$ leads to a medium or high weak neighbor.  Now $\mu^*(v)\ge 4 - 4
(\beta-\alpha) \ge 2+\alpha$, where the last inequality is equivalent to
$2(18+k)/(2+5k)\ge0$.
Thus, $v$ has at most three 2-neighbors.

If $v$ has a medium or high neighbor, then $\mu^*(v)\ge 4 - 3(2\alpha-\beta)+(2\alpha-\beta) \ge 2+\alpha$
by~(\ref{i.3}).  Similarly, if $v$ has at most two 2-neighbors, then $\mu^*(v)\ge 4 -2(2\alpha-\beta)-2(4/(5k+2))\ge 2+\alpha$ by~(\ref{i.3}).  Thus,
$v$ has exactly three 2-neighbors and one low neighbor.
If $v$ has three incident 2-threads and its low neighbor $u$ is a $5^-$-neighbor, then
color $G\setminus (N[v]-u)$ by minimality.  Now color $v$ (it has at most 8
restrictions on its color), then color each of its uncolored neighbors (the last
has at most 6 restrictions on its color).  Suppose instead that $v$ has three
incident 2-threads and its low neighbor is a 6-vertex; so $k\ge 15$.  Now $v$
has at most 9 restrictions on its color, but since $k\ge 15$, we can
complete the coloring.

So we may assume that $v$ has at most two incident 2-threads.
If $v$ has no incident 2-threads, then $\mu^*(v)\ge 4-3(\alpha/2)-4/(5k+2)\ge
2+\alpha$, which is easy to verify.  If $v$ has one incident 2-thread, then $\mu^*(v)\ge
4-(2\alpha-\beta)-2(\alpha/2)-4/(5k+2)\ge \alpha +2$ simplifies to
$1 \ge 7\alpha/2 - 2\beta$, which holds when $k\le 18$.
When $k\ge 19$, let $u$ be $v$'s neighbor on its 2-thread.
Color $G-u$ by minimality.  Recolor $v$ to avoid the color on $u$'s other
neighbor ($v$ has at most $6+2+2+1$ constraints on its color), then color
$u$.  So, suppose $v$ has two incident 2-threads.  If $v$'s $3^+$-neighbor is a
3-neighbor, then let $u$ be a neighbor on a 2-thread.  By minimality, color
$G-u$.  Recolor $v$ to avoid the color on $u$'s other neighbor ($v$ has
at most 8 restrictions on its color), then color $u$.  If instead, $v$'s final
neighbor is a $4^+$-vertex, then $\mu^*(v)\ge 4-2(2\alpha-\beta)-\alpha/2$.
This quantity is at least $2+\alpha$ when $k\le 10$.  So assume $k\ge 11$ and let
$u$ be a neighbor on a 2-thread.  Color $G-u$ by minimality.  Now recolor
$v$ to avoid the color on $u$'s other neighbor ($v$ has at most 11
restrictions on its color), then color $u$.

\medskip

\noindent
\textbf{Case:} $d(v)=3$.
If $v$ has no 2-neighbors, then $v$ begins with charge 3 and gives away no
charge, so assume that $v$ has at least one 2-neighbor.  First suppose that
$v$ has three 2-neighbors and at least one such neighbor $u$ is on a
2-thread or on a 1-thread leading to a medium or low weak neighbor.
Color $G-u$ by minimality.  Uncolor $v$ and color $u$ (it has at most
$(k-2)+2$ constraints on its color); now color $v$ (which has at most 6
constraints on its color).  If instead $v$ has three incident 1-threads and each
leads to a high weak neighbor, then $\mu^*(v)=3-0$.  So $v$ must have at
most two 2-neighbors.

Suppose that $v$ has exactly two 2-neighbors; call them $u_1$ and $u_2$.  If $u_1$
and $u_2$ both lie on 2-threads and $v$'s third neighbor is a medium or low
neighbor, then color $G\setminus\{v,u_1,u_2\}$ by minimality.  Now color $v$
(which has at most $(k-2)+2$ constraints on its color), then $u_1$ and $u_2$.
If instead $v$ has a high neighbor, then
$\mu^*(v)=3-2(2\alpha-\beta)+\beta=3-4\alpha+3\beta\ge 2+\alpha$ which holds
by~(\ref{i.3}) since $\beta = 1 - 16/(5k+2)$.
So $v$ must not have two incident 2-threads.

Suppose that $v$ has an incident 1-thread and an incident 2-thread, with $u_1$
on the 2-thread.  If $v$'s $3^+$-neighbor is low, then color $G-u_1$ by
minimality.  Recolor $v$ to avoid the color on $u_1$'s other neighbor.
If $v$'s $3^+$-neighbor has degree at most 5, then $v$ has at most $5+2+1$
constraints on its color; if it has degree 6, then $v$ has at most $6+2+1$
constraints on its color, but now $k\ge 15$ since a 6-vertex is low, so the
recoloring succeeds.  Finally, color $u_1$.  So $v$'s $3^+$-neighbor is medium
or high; if it is high, then $\mu^*(v)\ge 3+\beta-(2\alpha-\beta)-\alpha/2\ge 2
+ \alpha$ by $(\ref{i.3})$.  So assume $v$'s $3^+$-neighbor is medium.  Let
$u_2$ be the neighbor of $v$ on a 1-thread and $w$ the other neighbor of $u_2$;
see Figure~\ref{f.8}$(ii)$.  If $w$ is medium or high, then $\mu^*(v)\ge
3+(2\alpha-\beta)-(2\alpha-\beta)-(\beta-\alpha)\ge 2+\alpha$, since the second
inequality is equivalent to $1\ge \beta$.  So assume $w$ is low.  Now color
$G\setminus\{v,u_1,u_2\}$ by minimality.  Color $v$ (it has at most
$(k-2)+1+1$ constraints on its color), then $u_2$, and finally $u_1$.

Suppose that $v$ has two incident 1-threads.
If $v$'s $3^+$-neighbor is medium or high, then $\mu^*(v)\ge
3+(2\alpha-\beta)-2(\alpha/2)\ge 2+\alpha$, which holds
since $1 > \beta$.  So assume this third neighbor is
low. If at least one 1-thread leads to a weak neighbor that is low, then let $u_1$ be
the 2-vertex on that 1-thread.  Color $G-u_1$ by minimality.
Recolor $v$ to avoid the color on the neighbor of
$u_1$; if $v$'s $3^+$-neighbor has degree at most 5, then $v$ has at most $5+2+1$
constraints on its color, and if it has degree 6, then $v$ has at most $6+2+1$
constraints on its color, but now $k\ge 15$ since a 6-vertex is low, so the
recoloring succeeds.  Now color $u_1$ (the analysis of constraints on the color
of $u_1$ is analogous to that of $v$).  So neither 1-thread leads to a low weak
neighbor.

If at least one 1-thread leads to a weak neighbor that is high, then $\mu^*(v)\ge
3-(\beta-\alpha) \ge 2 + \alpha$, which holds since $1> \beta$.
So assume that both 1-threads
lead to weak neighbors that are medium.  Now $\mu^*(v)\ge 3 - 2(\beta-\alpha)$.
This quantity is at least $2+\alpha$ when $k\le 22$; so assume $k\ge 23$.  Color
$G\setminus\{v,u_1,u_2\}$ by minimality.  Now color $u_1$ and $u_2$ (each has
at most $(k-2)+1+1$ constraints on its color), then color $v$ (which
has at most $6+2+2$ constraints on its color).  Thus $v$ must have exactly
one 2-neighbor.

Suppose that $v$ has one 2-neighbor and two $3^+$-neighbors.  If either
$3^+$-neighbor is medium or high, then $\mu^*(v)\ge 3 + (2\alpha-\beta)-
(2\alpha-\beta)=3> 2 + \alpha$; so assume that both $3^+$-neighbors are low
vertices.  Suppose that the 2-neighbor $u$ lies on a 1-thread.
(We assume that $v$'s weak neighbor is medium or low, since otherwise
$\mu^*(v)=3-0$.) Now $\mu^*(v)\ge 3 - \alpha/2$.  This quantity is at least
$2+\alpha$ when $k\le 14$.  So suppose $k\ge 15$.  Now color $G-u$ by
minimality, and uncolor $v$.  First color $u$ (which has at most $(k-2)+1+1$
constraints on its color), then color $v$ (which has at most $6+6+2$
constraints). 

So now suppose that $v$'s single 2-neighbor $u$ lies on a 2-thread.  As above,
$v$'s two other neighbors must be low.  Suppose $k\ge 13$.  By minimality, color
$G-u$.  Recolor $v$ to avoid the color on $u$'s other neighbor (it has at
most $6+6+1$ constraints on its color), then color $u$ (which has at most
5 constraints).  Similarly, if $11\le k\le 12$, then 6-vertices
are medium.  In this case, the same recoloring process works; now however $v$'s
low neighbors must be $5^-$-vertices, so $v$
has at most $5+5+1$ constraints on its color.  Now suppose $k\le 10$.
If $v$ has a 5-neighbor, then $\mu^*(v)\ge 3-(2\alpha-\beta)+8/(5k+2)$.  This
quantity is at least $2+\alpha$ when $k\le 10$.
If $v$ has a 4-neighbor, then $\mu^*(v)\ge 3-(2\alpha-\beta)+4/(5k+2)$.  This
quantity is at least $2+\alpha$ when $k=8$.  
If $v$ has two 3-neighbors, then we again use the recoloring process above.
Finally, if $9\le k\le 10$ and $v$
has no $5^+$-neighbor, then the recoloring process above works again; this
time $v$ has at most $4+4+1$ constraints on its color (which is fine, since
$k\ge 9$).

\medskip

\noindent
\textbf{Case:} $d(v)=2$.  We show that each $\l$-thread $P$ receives charge at
least $\alpha \l$, so that each 2-vertex finishes with charge at least
$2+\alpha$.  If $\l=3$, then $\mu^*(P)=6+2\beta+(3\alpha-2\beta)=3(2+\alpha)$.
If $\l=2$, then by reducible configuration (C3) at least one endpoint of $P$
must be a high vertex.  So $\mu^*(P)\ge4+\beta+(2\alpha-\beta)=2(2+\alpha)$.
Finally, suppose $\l=1$.  If at least one endpoint is a medium or high vertex,
then $\mu^*(P)\ge 2+(2\alpha-\beta)+(\beta-\alpha)=2+\alpha$.  If instead both
endpoints are low vertices, then $\mu^*(P)\ge 2 + 2(\alpha/2)=2+\alpha$.
\bigskip

We have shown that every vertex finishes with charge at least $2+\alpha$.  This
contradicts the fact that $\mad(G)<2+\alpha$, and so finishes the proof.
\end{proof}

Thanks very much to two anonymous referees, whose careful reading of the
paper was unusually helpful.

\end{document}